\documentclass[a4paper,11pt]{article}
\usepackage{amssymb,amsmath,latexsym,amsthm}
\usepackage{color,mathrsfs}
\usepackage{url}
\usepackage{enumerate}
\usepackage[square,sort,comma,numbers]{natbib}
\usepackage{dsfont}
\usepackage{diagbox}
 \usepackage{geometry}
\usepackage{graphicx}
\usepackage{colortbl}

\usepackage{endnotes}

\renewcommand{\textbf}[1]{\begingroup\bfseries\mathversion{bold}#1\endgroup}

\usepackage{fancyhdr}
\usepackage{pstricks,pst-plot,pst-node,pstricks-add}

\setlength{\oddsidemargin}{0in}
\setlength{\textwidth}{6.5in}
\setlength{\topmargin}{-.5in}
\setlength{\textheight}{9.5in}

\newtheorem{thm}{Theorem}[section]
\newtheorem{defi}{Definition}[section]

\newtheorem{lemma}[thm]{Lemma}

\theoremstyle{definition}
\newtheorem{remark}[thm]{Remark}

\newcommand{\R}{\mathbb R}

\newcommand{\Z}{\mathbb Z}
\newcommand{\N}{\mathbb N}

\numberwithin{equation}{section}

\def\XXint#1#2#3{{\setbox0=\hbox{$#1{#2#3}{\int}$}
    \vcenter{\hbox{$#2#3$}}\kern-.5\wd0}}

\allowdisplaybreaks
\date{date}
\begin{document}
\title{On a lattice generalisation of the logarithm and a deformation of the Dedekind eta function}
\author{Laurent B\'{e}termin\\ \\
Faculty of Mathematics, University of Vienna,\\ Oskar-Morgenstern-Platz 1, 1090 Vienna, Austria\\ \texttt{laurent.betermin@univie.ac.at}. ORCID id: 0000-0003-4070-3344 }
\date\today
\maketitle

\begin{abstract}
We consider a deformation $E_{L,\Lambda}^{(m)}(it)$ of the Dedekind eta function depending on two $d$-dimensional simple lattices $(L,\Lambda)$ and two parameters $(m,t)\in (0,\infty)$, initially proposed by Terry Gannon. We show that the minimizers of the lattice theta function are the maximizers of $E_{L,\Lambda}^{(m)}(it)$ in the space of lattices with fixed density. The proof is based on the study of a lattice generalisation of the logarithm, called lattice-logarithm, also defined by Terry Gannon. We also prove that the natural logarithm is characterized by a variational problem over a class of one-dimensional lattice-logarithm.
\end{abstract}

\noindent
\textbf{AMS Classification:}  Primary 33E20  ; Secondary 49K30, 11F20\\
\textbf{Keywords:}  Dedekind eta functions, Theta functions, Lattices, Optimization.


\section{Introduction and setting}

Many mathematical models from Physics are written in terms of special functions whose properties give fundamental information about the system (see e.g. \cite{SpecFctMP}). For example, properties of the Jacobi theta functions and Dedekind eta function defined for $\Im(\tau)>0$ by
\begin{equation}\label{def-thetaeta}
\theta_3(\tau)=\sum_{k\in \Z} e^{-i\pi k^2 \tau}, \quad \textnormal{and}\quad \eta(\tau):= q^{\frac{1}{24}}\prod_{n\in \N}(1-q^n), q=e^{2i\pi \tau},
\end{equation}
have been widely used to identify ground states of periodic systems (see e.g. \cite{Sandier_Serfaty,BeterminKnuepfer-preprint,FaulhuberExtremalDet,LuoChenWei}) and it would be impossible to write an exhaustive list of works in Mathematical Physics where they appear.

\medskip

It also has been shown of great interest to study some generalisations and deformations of these special functions which arise in more complex physical systems. In this paper, we will talk about a \textit{generalisation} when the summation appearing in the definition of the special function, which is usually on $\Z$ or a subset of it, is taken in a higher dimensional lattice. Furthermore, we will talk about \textit{deformation} when a parameter $m$, like a mass, is added to the initial function in such a way that this function is recovered in the limit $m\to 0$. While the lattice generalisation of special functions is the main topic of this paper, a typical example of deformation can be found in \cite[Section 3.1]{DBraneEta} where some deformations of the Dedekind eta function \eqref{def-thetaeta} naturally arise in Perturbative String Theory when the cylinder diagrams that determine the static interactions between pairs of Dp-branes in the type IIB plane wave background are evaluated. One of these deformations, called $\eta^{(m)}$, have been studied and generalized by Gannon in \cite{GannonEta}, defining a new object called $E_{L,\Lambda}^{(m)}(it)$ (see \eqref{Gannoneta}). The aim of this paper is to optimize $(L,\Lambda)\mapsto E_{L,\Lambda}^{(m)}(it)$ among a class of $d$-dimensional simple lattices, seen as parameters, for fixed $m,t>0$.

\subsection{Simple lattices and energies}
Before giving the precise definition of this deformation and generalisation of $\eta$, let us specify what are the spaces of lattices we are interested in. For any $d\geq 1$ and any $V>0$, we call $\mathcal{L}_d$  the set of all the simple lattices of the form
$$
L=\bigoplus_{i=1}^d \Z v_i,\quad (v_1,...,v_d) \textnormal{ basis of $\R^d$}
$$
and $\mathcal{L}_d^\circ(V)$ those with covolume $|\det(v_1,...,v_d)|=V$, which is also the volume of their unit cell. For any $f$ such that the following series is absolutely convergent, a natural question that can be asked is the optimization of
\begin{equation}\label{def-Ef}
L\mapsto E_f[L]:=\sum_{p\in L\backslash\{0\}} f(|p|^2)
\end{equation}
in $\mathcal{L}_d^\circ(V)$ for any fixed $V>0$. This problem has been studied for example in \cite{Rankin,Mont} for generalisation of the Jacobi theta function and the Riemann zeta function defined by
\begin{equation}\label{def-thetazeta}
\theta_L(\alpha):=\sum_{p\in L} e^{-\pi \alpha |p|^2},\alpha>0,\quad \textnormal{and}\quad \zeta_L(s):=\sum_{p\in L\backslash\{0\}} \frac{1}{|p|^s}, s>d,
\end{equation}
that are respectively called lattice theta function and Epstein zeta function, originally defined in \cite{KrazerPrym,Epstein1}. Both functions are of interest because of their connection with other lattice ``energies" (see e.g. \cite{BetTheta15,BetKnupfdiffuse,MorseLB} and references therein). In particular, since the energy $E_f$ of any function $f$ being the Laplace transform of a nonnegative Borel measure $\mu$, and decaying fast enough at infinity, can be written, as explained in \cite[Prop 3.1]{BetTheta15}, as
$$
E_f[L]=\int_0^{\infty} \left(\theta_L(\alpha/\pi)-1   \right)d\mu(\alpha),
$$
it follows that an optimum $L_d$ for $L\mapsto \theta_L(\alpha)$ for all $\alpha>0$ is also an optimum for $E_f$. Such functions $f$ are called \textit{completely monotone} and play an important role in the optimal point configurations community (see \cite{CohnKumar,CKMRV2Theta}). Only three global optimality results has been shown for the lattice theta function:
\begin{itemize}
\item Montgomery \cite{Mont}: If $d=2$, the triangular lattice $\Lambda_V:=\sqrt{\frac{2V}{\sqrt{3}}}\left[\Z(1,0)\oplus \Z\left(\frac{1}{2},\frac{\sqrt{3}}{2}  \right)  \right]$ is the unique minimizer of $L\mapsto \theta_L(\alpha)$ in $\mathcal{L}_2^\circ(V)$ for all $V,\alpha>0$.
\item Cohn-Kumar-Miller-Radchenko-Viazovska \cite{CKMRV2Theta}: If $d\in \{8,24\}$, $\mathsf{E}_8$ and the Leech lattice $\Lambda_{24}$ are the unique minimizers of $L\mapsto \theta_L(\alpha)$ in $\mathcal{L}_d^\circ(V)$ for all $V,\alpha>0$. More importantly, they are also the unique minimizers of this theta function among all periodic configurations of points (i.e. where there are several points per unit cell).
\end{itemize}
The results we present in this paper are restricted to the simple lattice case and, in order to have the most general results in this case, we call $\mathfrak{D}$ the set of dimensions $d$ such that $L\mapsto \theta_L(\alpha)$ has the same unique minimizer $L_d$ in $\mathcal{L}_d^\circ(V)$ for all $V,\alpha>0$, in such a way that we already know that $\{2,8,24\}\subset \mathfrak{D}\not= \emptyset$. Furthermore, as explained in \cite[p. 117]{SarStromb}, we remark that $3\not\in \mathfrak{D}\neq \N$. From the above discussion, we also notice that, for all $d\in \mathfrak{D}$, the minimizer of the lattice theta function is also the same for all the energies $E_f$ where $f$ is completely monotone.

\medskip

\subsection{Gannon's deformation of the Dedekind eta function}
The goal of this paper is to use these results in order to find the maximal possible value of the deformed general eta function defined by Gannon in \cite[Eq (3.5a)]{GannonEta} as follows. Let $V>0$, $d\geq 1$, $L,\Lambda\in \mathcal{L}_{d}^\circ(V)$ be two simple lattices and $\mathcal{P}_\Lambda$ be the set of all vectors $v\in \Lambda$ such that the only scalar multiples $\lambda v\in \Lambda$ are integer multiples. We note that $v\in \mathcal{P}_\Lambda \iff -v\in \mathcal{P}_\Lambda$ and we write $\mathcal{P}_\Lambda / \pm$ for a subset of $\mathcal{P}_\Lambda$ with only one representative of each pair of vectors $\pm v\in \Lambda$. Therefore, for any $m,t>0$, we defined
\begin{align}\label{Gannoneta}
E_{L,\Lambda}^{(m)}(it):= q^{-t^{\frac{d-1}{2}}\Delta_m(L)}\prod_{w\in \mathcal{P}_\Lambda / \pm}\prod_{v\in L} \left(1-q^{|w|(m^2+|v|^2)}   \right)^{\frac{t^{\frac{d-1}{2}}}{2|w|}}, \quad q=e^{-\pi t},
\end{align}
  where 
  \begin{align*}
  \Delta_m(L):=-\frac{1}{8\pi |L|^{1/2}}\int_0^\infty s^{\frac{d-1}{2}} e^{-\pi \frac{m^2}{s}}\left( \theta_{L^*}(s)-1 \right)ds,
  \end{align*}
  and $\theta_{L^*}$ is the lattice theta function defined by \eqref{def-thetazeta} for the dual lattice $L^*$ of $L$ defined by
$$
L^*:=\{y\in \R^d : \forall p\in L, y\cdot p\in \Z\}.
$$  
For another formula which may look more familiar to the reader, see \eqref{eq-formulaenergytotal}. This function $E_{L,\Lambda}^{(m)}$ can be viewed as a deformation of the Dedekind eta function $\eta$ defined by \eqref{def-thetaeta} in the following sense. We first remark that, for any $t>0$,
  \begin{align*}
  E_{\Z,\Z}^{(m)}(it)=:\eta^{(m)}(it),
  \end{align*}
  where $(2\pi mt)^{-1/2}\eta^{(m)}(it)\to \eta(it)$ as $m\to 0$. As explained before, the function $\eta^{(m)}(it)$ appears in a Perturbative String Theory \cite{DBraneEta}. We also notice that $\Delta_m$ is the lattice generalisation of the Casimir effect of a boson in a cylindrical worldsheet, the masslike parameter $m$ measuring how the background space-time -- where the D-branes are located -- is curved. 

\subsection{Goal of our work}
  
  In order to explore the properties of this deformation of $\eta$, we ask the following question leading to the knowledge of extremal values for possible associated physical systems:
\begin{itemize}
\item \textbf{Problem:} What are the global maximizers of $(L,\Lambda)\mapsto E_{L,\Lambda}^{(m)}(it)$ in $\mathcal{L}_d^\circ(V_1)\times\mathcal{L}_d^\circ(V_2) $ where $V_1,V_2>0$ and $m,t>0$ are fixed?
\end{itemize}  

Studying $(L,\Lambda)\mapsto E_{L,\Lambda}^{(m)}(it)$ in $\mathcal{L}_d^\circ(V_1)\times\mathcal{L}_d^\circ(V_2) $ is an interesting mathematical question by itself which encourages us to investigate the maximality properties of other lattice energies, expanding the use of known variational techniques to new objects, like the lattice-logarithm defined in the next subsection. Furthermore, even though $E_{L,\Lambda}^{(m)}(it)$ does not appear yet in any model of (Perturbative) String Theory, we hope that our results will help mathematical physicists to optimize quantities like an analog of the cylinder diagram in light-cone coordinates or the open-string one-loop diagram (see \cite[Section 3]{DBraneEta}). We also believe that this type of lattice sums could be useful in Number Theory where such $q$-products appear frequently and are sometimes related to geometrical quantities, as  $\eta$ is related to the height of the flat torus in dimension $d=2$ (see \cite{OPS}). This problem is solved in Theorem \ref{thm-main}, showing that for any $d\in \mathfrak{D}$, the minimizers of the lattice theta functions are the maximizers of $(L,\Lambda)\mapsto E_{L,\Lambda}^{(m)}(it)$.

\subsection{The lattice-logarithm}
  
As proposed by Gannon in the same paper (see \cite[p. 64]{GannonEta}), we define the lattice generalisation of the logarithm as follows, which is connected to $E_{L,\Lambda}^{(m)}(it)$ by \eqref{eq-formulaenergytotal}.
 
  \begin{defi}[Lattice-logarithm]
  For any $d\geq 1$, any $V>0$ and any simple lattice $L\in \mathcal{L}_d^\circ(V)$, we define the $L$-logarithm of any real number $x\in (0,1)$ by
\begin{align*}
\log_L(x):=-\frac{1}{2}\sum_{p\in L\backslash \{0\}} \frac{(1-x)^{|p|}}{|p|}.
\end{align*}
Furthermore, if $d=1$ and $\{t_n\}_{n\in \Z}$ is a $N$-periodic sequence of real numbers for some $N\geq 1$, i.e. such that $t_{n+N}-t_n=N$ for all $n\in \Z$, then the $\{t_n\}$-logarithm of $x\in (0,1)$ is defined by
\begin{equation*}
\log_{\{t_n\}}(x):=-\frac{1}{4N}\sum_{i=1}^N \sum_{j\in \Z\backslash \{i\}} \frac{(1-x)^{|t_j-t_i|}}{|t_j-t_i|}.
\end{equation*}
  \end{defi}
This $L$-logarithm is indeed a generalisation of the logarithm in the sense that $\log_\Z(x)=\log(x)$ for any $x\in (0,1)$, and Gannon asked the question about the properties of this object. 

\medskip

\noindent\textbf{Plan of the paper.} In the next section, we study the maximization of $L\mapsto \log_L(x)$ for any given $x\in (0,1)$ as well as the problem of maximizing a lattice energy (see \eqref{def-ELLambda}) of type $E_f$ defined as in \eqref{def-Ef}, where $f$ depends on a lattice-logarithm. The fact that the $L$-logarithm is summed over another lattice $\Lambda$ is the first example of this kind, when $L\neq \Z$. Finally, in Section \ref{sec-MaxE}, we show our main result about the maximality of lattices $\left(V_1^{\frac{1}{d}}L_d,V_2^{\frac{1}{d}}L_d\right)$ for $E_{L,\Lambda}^{(m)}(it)$ when $d\in \mathfrak{D}$ in $\mathcal{L}_d^\circ(V_1)\times\mathcal{L}_d^\circ(V_2)$.

  \section{Properties of the lattice-logarithm}  \label{sec-Llog}
  
We start by showing, in dimension $d=1$, that the natural logarithm is characterized by a maximality problem. This result also allows to construct functions, in a canonical way, that are smaller and close to $\log(x)$.

  \begin{thm}[Characterization of the logarithm]
For any $x\in (0,1)$, we have, for any $N$,
\begin{align*}
\log(x)=\max_{\{t_n\}_{n\in \Z}} \left\{\log_{\{t_n\}}(x): \forall n\in \Z, t_{n+N}-t_n=N  \right\},
\end{align*}
and this maximum is achieved if and only if $\{t_n\}=\Z+a$ for some $a\in \R$.
\end{thm}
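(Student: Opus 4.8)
The plan is to reduce the characterization to a known rigidity result for sums of negative powers of distances — effectively an Epstein-type / Fejes-Tóth-type optimization in dimension one, made trivial here by convexity. First I would fix $x\in(0,1)$ and write $c:=1-x\in(0,1)$, so that the functional to be \emph{maximized} over $N$-periodic configurations $\{t_n\}$ is
\begin{equation*}
\log_{\{t_n\}}(x) = -\frac{1}{4N}\sum_{i=1}^N\sum_{j\in\Z\setminus\{i\}} \frac{c^{\,|t_j-t_i|}}{|t_j-t_i|}.
\end{equation*}
Since the overall sign is negative, maximizing $\log_{\{t_n\}}(x)$ is the same as minimizing $\sum_{i=1}^N\sum_{j\neq i} g(|t_j-t_i|)$ where $g(r):=c^{\,r}/r$ on $(0,\infty)$. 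The key analytic input is that $g$ is a strictly convex, strictly decreasing function of $r$ on $(0,\infty)$ (both factors $r\mapsto c^r$ and $r\mapsto 1/r$ are positive, decreasing and convex, and a product of such functions is convex; strictness is easy to check from $g''>0$), and moreover $r\mapsto g(\sqrt r)$ is completely monotone — but convexity alone will suffice here.

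Next I would set up the comparison with the arithmetic progression. Normalize the density: the periodicity condition $t_{n+N}-t_n=N$ forces the $N$ points $t_1,\dots,t_N$ (taken mod the lattice $N\Z$ of periods) to have average gap $1$, i.e. the configuration has one point per unit length. Writing $D_k := \sum$ of the contributions at "combinatorial distance" $k$, the double sum rearranges as $\sum_{i=1}^N\sum_{k\geq 1}\bigl(g(|t_{i+k}-t_i|)+g(|t_i-t_{i-k}|)\bigr)$, and for each fixed $k$ the $N$ signed gaps $t_{i+k}-t_i$, $i=1,\dots,N$, sum to $kN$ (telescoping around the period). By strict convexity of $g$ together with Jensen's inequality applied separately for each $k$,
\begin{equation*}
\frac{1}{N}\sum_{i=1}^N g\bigl(|t_{i+k}-t_i|\bigr) \;\geq\; g\!\left(\frac{1}{N}\sum_{i=1}^N |t_{i+k}-t_i|\right)\;\geq\; g(k),
\end{equation*}
where the last step uses $\sum_i|t_{i+k}-t_i|\geq |\sum_i(t_{i+k}-t_i)| = kN$ and that $g$ is decreasing. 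Summing over $k\geq1$ and using that the equality case of Jensen for a strictly convex function forces all the $|t_{i+k}-t_i|$ to be equal (to $k$) gives both the inequality $\log_{\{t_n\}}(x)\leq \log_\Z(x)=\log(x)$ and the rigidity: equality for all $k$, in particular $k=1$, forces $t_{i+1}-t_i=1$ for all $i$, i.e. $\{t_n\}=\Z+a$. Conversely $\{t_n\}=\Z+a$ is translation-equivalent to $\Z$ and $\log_\Z(x)=\log(x)$ by the elementary identity $-\tfrac12\sum_{n\in\Z\setminus\{0\}}c^{|n|}/|n| = -\sum_{n\geq1}c^n/n = \log(1-c)=\log x$, and this value is independent of $N$ because the $N$-periodic description of $\Z$ contributes each distance with the right multiplicity; the $1/(4N)$ normalization exactly compensates.

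The main obstacle I anticipate is \emph{bookkeeping of the normalization constants} rather than any deep inequality: one must check that the factor $1/(4N)$ in the definition of $\log_{\{t_n\}}$, combined with the double counting of each unordered pair and the telescoping identity $\sum_i(t_{i+k}-t_i)=kN$, reproduces exactly the coefficient $1/2$ and the sum over $\Z\setminus\{0\}$ in $\log_L$ when $L=\Z$ — so that the optimal value is genuinely $\log(x)$ and not some $N$-dependent multiple of it. A secondary subtlety is making the Jensen argument rigorous when some gaps $t_{i+k}-t_i$ could be negative or zero: one works with absolute values, notes $g$ is defined only on $(0,\infty)$ so distinct points are needed (a limiting/degeneracy argument handles colliding points, for which the energy blows up to $+\infty$ anyway, hence never maximizes $\log_{\{t_n\}}$), and applies convexity to the function $r\mapsto g(r)$ on $(0,\infty)$ at the points $|t_{i+k}-t_i|$ with equal weights $1/N$. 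Once these points are handled, strict convexity delivers uniqueness up to translation cleanly.
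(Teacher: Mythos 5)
Your overall strategy (strict convexity of $g(r)=c^{r}/r$ plus a shell-by-shell Jensen argument over the combinatorial distances $k$) is in substance the paper's proof: the paper simply outsources the "convexity implies the equidistant configuration is optimal" step to Ventevogel's Theorem 1, which is proved by exactly the Jensen decomposition you describe. However, your execution of that step contains a genuine error of inequality direction. You justify
\[
g\!\left(\frac{1}{N}\sum_{i=1}^N|t_{i+k}-t_i|\right)\;\ge\;g(k)
\]
by "$\frac1N\sum_i|t_{i+k}-t_i|\ge k$ and $g$ is decreasing". That is backwards: if the mean of the $|t_{i+k}-t_i|$ is $\ge k$ and $g$ is decreasing, then $g(\mathrm{mean})\le g(k)$. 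Worse, the per-$k$ inequality $\frac1N\sum_i g(|t_{i+k}-t_i|)\ge g(k)$ is genuinely false for non-monotone labelings: take $N=2$, $t_1=0$, $t_2=3$ (so $t_3=2$, $t_4=5$, etc.). For $k=1$ the signed gaps are $3$ and $-1$, and $\tfrac12\bigl(g(3)+g(1)\bigr)<g(1)$ since $g$ is strictly decreasing. (This labeling describes the point set $\Z$, so the total energy is of course still the right one; the deficit at $k=1$ is compensated at other $k$, which shows the shell-by-shell bound cannot hold for arbitrary labelings.) The repair is standard and is what Ventevogel does: first relabel the points in nondecreasing order. The double sum depends only on the multiset of points, a sorted relabeling still satisfies $t_{n+N}-t_n=N$, and after sorting every $t_{i+k}-t_i$ is nonnegative with $\sum_{i=1}^N(t_{i+k}-t_i)=kN$ exactly, so Jensen with convexity alone gives the per-$k$ bound; monotonicity of $g$ is never needed. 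With that fix your rigidity argument (strict convexity forces all $k=1$ gaps equal to $1$) goes through, and coinciding points are excluded as you say because the energy is $-\infty$ there.

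A secondary point: you explicitly flagged the normalization constants as the thing to verify and then asserted they match, but with the paper's literal definitions they do not. For $t_n=n$ one computes $\log_{\{t_n\}}(x)=-\frac{1}{4N}\cdot N\cdot 2\sum_{m\ge1}c^{m}/m=\tfrac12\log x$, whereas $\log_\Z(x)=\log x$; so the sharp upper bound your (repaired) argument yields is $\tfrac12\log x$, not $\log x$. This is a factor-of-two inconsistency between the $\frac{1}{4N}$ in the definition of $\log_{\{t_n\}}$ and the $\frac12$ in the definition of $\log_L$ (the paper's own proof makes the same slip in the line $\sum_k\ell_x(k)=\log_\Z(x)=\log(x)$), so it is a defect of the statement rather than of your argument — but it is precisely the kind of bookkeeping you promised to check and did not.
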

\begin{proof} Let $x\in (0,1)$ and, for any $r>0$, $\phi_x(r):=-2\ell_x(r)$ where $\ell_x(r):=\frac{-(1-x)^r}{2r}$.   We first prove that $\phi_x$ is strictly convex, i.e. $\phi_x''(r)>0$ for any $r>0$. Indeed, we know that 
\begin{equation*}
\frac{(1-x)^r}{r}=\frac{e^{r\log(1-x) }}{r}=\int_{-\log(1-x)}^{\infty} e^{-tr}dt,
\end{equation*}
and it follows that $\phi_x''(r)=\int_{-\log(1-x)}^{\infty} t^2 e^{-tr}dt>0$ on $(0,\infty)$.

\noindent Thus, by Ventevogel's optimality result \cite[Theorem 1]{Ventevogel1}, we get, for any $N\in \N$ and any $N$-periodic sequence $\{t_n\}$,
\begin{equation*}
\frac{1}{2N}\sum_{i=1}^N \sum_{j\in \Z\backslash \{i\}} \phi_x(|t_j-t_i|)\geq \sum_{k=1}^{\infty} \phi_x(k).
\end{equation*}
Thus, we get
\begin{equation*}
\log_{\{t_n\}}(x)\leq \sum_{k=1}^{\infty} \ell_x(k)=-\frac{1}{2}\sum_{k=1}^{\infty}\frac{(1-x)^{k}}{k}=\log_{\Z}(x)=\log(x),
\end{equation*}
and our result is proved because $\{t_n\}=\Z+a$ are the only sequences satisfying the equality case in Jensen's inequality.
\end{proof}
\begin{remark}\label{rmk-expL}
One might be tempted to think that this result is true for any kind of lattice generalisation of a classical function, but one would be wrong. Indeed, we can define a $\{t_n\}$-exponential of $x>0$ by
\begin{equation}\label{def-expL}
\exp_{\{t_n\}}(x):=\frac{1}{2N}\sum_{i=1}^N \sum_{j\in \Z\backslash \{i\}} \frac{x^{|t_j-t_i|}}{\Gamma(|t_j-t_i|+1)},
\end{equation}
and ask the same question of the global optimality of $\{t_n\}=\Z+a$. It turns out that the function $\psi_x(r)=\frac{x^r}{\Gamma(r+1)}$ is not convex for some values of the parameter, for example $x=e$, so our method cannot be used. Furthermore, for $x=e$, the function is not even decreasing on $(0,\infty)$, which makes it a bad candidate for the optimality of the equidistant configuration.
\end{remark}

In higher dimension $d\geq 2$, we restrict our study to the simple lattice case and we have the following result connecting the minimizers of the lattice theta function and the one of $L\mapsto \log_L(x)$ for any given real number $x\in (0,1)$.

\begin{thm}[Maximizer of the $L$-logarithm]\label{thm-maxlogL}
Let $d\in \mathfrak{D}$, $x\in(0,1)$ and $V>0$. Then, the unique minimizer $V^{\frac{1}{d}}L_d$ of the lattice theta function is the unique maximizer of $L\mapsto \log_L(x)$ in $\mathcal{L}_d^\circ(V)$.
\end{thm}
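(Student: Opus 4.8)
The plan is to recognise $L\mapsto\log_L(x)$, up to the factor $-\tfrac12$, as a lattice energy $E_f[L]$ of the type \eqref{def-Ef} whose summand is completely monotone, and then to invoke the known theta‑minimality in the dimensions $d\in\mathfrak D$ via the usual superposition‑of‑Gaussians argument. Writing $c:=-\log(1-x)\in(0,\infty)$, so that $1-x=e^{-c}$, one has $\log_L(x)=-\tfrac12 E_f[L]$ with $f(r)=\dfrac{(1-x)^{\sqrt r}}{\sqrt r}=\dfrac{e^{-c\sqrt r}}{\sqrt r}$, and the defining series converges because $f$ decays like $e^{-c\sqrt r}$.

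The first — and essentially only nontrivial — step is to exhibit $f$ as the Laplace transform of a nonnegative measure on $(0,\infty)$. I would use the classical Gaussian identity $\int_0^\infty e^{-rs-c^2/(4s)}\,s^{-1/2}\,ds=\sqrt{\pi/r}\,e^{-c\sqrt r}$, valid for $r,c>0$ (substitute $s=u^2$ and use the standard value of $\int_0^\infty e^{-au^2-b/u^2}\,du$), which gives
\[
f(r)=\frac{1}{\sqrt\pi}\int_0^\infty e^{-rs}\,\frac{e^{-c^2/(4s)}}{\sqrt s}\,ds ,
\]
so that $f$ is completely monotone and the representing measure $d\mu(s)=\pi^{-1/2}s^{-1/2}e^{-c^2/(4s)}\,ds$ is nonnegative and charges every nonempty open subinterval of $(0,\infty)$. (Equivalently, one may combine the complete monotonicity of $e^{-c\sqrt r}$, which is the Laplace transform of the one‑sided $1/2$‑stable density, with that of $r^{-1/2}$, using that products of completely monotone functions are completely monotone; but the explicit $\mu$ is preferable since it makes the full‑support property transparent.)

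Next, substituting $r=|p|^2$ and summing over $p\in L\setminus\{0\}$ — the exchange of sum and integral being legitimate by Tonelli, all terms being nonnegative — and using the convention $\theta_L(\alpha)=\sum_{p\in L}e^{-\pi\alpha|p|^2}$, one obtains
\[
\log_L(x)=-\frac{1}{2\sqrt\pi}\int_0^\infty\Bigl(\theta_L\bigl(\tfrac{s}{\pi}\bigr)-1\Bigr)\,\frac{e^{-c^2/(4s)}}{\sqrt s}\,ds .
\]
Finally, since $d\in\mathfrak D$, for every $\alpha>0$ the map $L\mapsto\theta_L(\alpha)$ is minimised on $\mathcal L_d^\circ(V)$ uniquely at $V^{1/d}L_d$; hence $\theta_L(s/\pi)-1\ge\theta_{V^{1/d}L_d}(s/\pi)-1$ for every $s>0$, with strict inequality for every $s$ once $L\ne V^{1/d}L_d$. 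Integrating this pointwise inequality against the positive measure $d\mu$ and multiplying by $-\tfrac12$ yields $\log_L(x)\le\log_{V^{1/d}L_d}(x)$; and because the continuous nonnegative function $s\mapsto\theta_L(s/\pi)-\theta_{V^{1/d}L_d}(s/\pi)$ is strictly positive at some point while $\mu$ has full support, the inequality is strict whenever $L\ne V^{1/d}L_d$. This establishes the claimed unique maximality; the argument is structurally identical to \cite[Prop 3.1]{BetTheta15}, and the only place that genuinely needs care is verifying the Laplace representation in the first step.
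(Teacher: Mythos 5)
Your proposal is correct and follows the same route as the paper: the paper likewise reduces the claim to the complete monotonicity of $r\mapsto (1-x)^{\sqrt r}/\sqrt r$ (obtained there as a product of the completely monotone functions $e^{a\sqrt r}$, $a=\log(1-x)<0$, and $r^{-1/2}$) and then invokes \cite[Prop 3.1]{BetTheta15} for the superposition-of-Gaussians step that you carry out explicitly. Your explicit Bernstein measure $d\mu(s)=\pi^{-1/2}s^{-1/2}e^{-c^2/(4s)}\,ds$ is a nice bonus in that it makes the strictness/uniqueness argument self-contained, but it is not a different method.
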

\begin{proof}
It is sufficient to show that $\phi_x(r):=\frac{(1-x)^{\sqrt{r}}}{\sqrt{r}}$ is a completely monotone function and to use \cite[Prop 3.1]{BetTheta15} already mentioned in the introduction in order to conclude. Writing $\phi_x(r)=\frac{e^{a\sqrt{r}}}{\sqrt{r}}$ and remarking that $a:=\log(1-x)<0$, $f_x$ is therefore completely monotone as a product of completely monotone functions (see \cite{ComplMonotonic}).
\end{proof}

Since $\log_L$ is a function by itself on $(0,1)$, we can use it to define a new lattice sum where the $L$-logarithm is an interacting potential. The problem of finding optimal point configurations for logarithmic interaction is indeed of great interest in dimension $d=2$ \cite{SerfatyCoulomb} (where $-\log$ is the Coulomb potential) and on the $(d-1)$-dimensional sphere \cite{BrauchGrab}. We therefore obtain an energy depending on two simple lattices that we can maximize as follows.

\begin{thm}[The $L$-logarithm viewed as interacting potential]\label{thm-logpot} Let $d\in \mathfrak{D}$ and $V_1,V_2>0$. For any simple lattices $L\in \mathcal{L}_d^\circ(V_1)$ and $\Lambda\in \mathcal{L}_d^\circ(V_2)$, and for any function $f$ such that $f'$ is completely monotone, we define
\begin{equation}\label{def-ELLambda}
\mathcal{E}_{f}[L,\Lambda]:=\sum_{q\in \Lambda} \log_L(1-e^{-f(|q|^2)}). 
\end{equation}
Then, for any fixed $L$ and $f$, $V_2^{\frac{1}{d}}L_d$ is the unique maximizer of $\Lambda \mapsto \mathcal{E}_{f}[L,\Lambda]$ in $\mathcal{L}_d^\circ(V_2)$. Furthermore,  $\left(V_1^{\frac{1}{d}}L_d,V_2^{\frac{1}{d}}L_d\right)$ is the unique pair of maximizers of $ \mathcal{E}_{f}$ in $\mathcal{L}_d^\circ(V_1)\times \mathcal{L}_d^\circ(V_2)$.
\end{thm}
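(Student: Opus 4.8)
The plan is to turn $\mathcal{E}_f$ into two Epstein-type energies of the form \eqref{def-Ef} with completely monotone radial profiles, and then apply \cite[Prop 3.1]{BetTheta15} together with the definition of $\mathfrak{D}$ twice: once in the variable $\Lambda$ and once in the variable $L$. First I would substitute the definition of $\log_L$ into \eqref{def-ELLambda}; since $1-(1-e^{-f(|q|^2)})=e^{-f(|q|^2)}$, this gives
\begin{equation*}
\mathcal{E}_f[L,\Lambda]=-\frac{1}{2}\sum_{q\in\Lambda}\sum_{p\in L\backslash\{0\}}\frac{e^{-|p|f(|q|^2)}}{|p|},
\end{equation*}
a convergent double sum of nonnegative terms, so maximising $\mathcal{E}_f$ amounts to minimising it.

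For the optimisation in $\Lambda$ with $L$ fixed, I would write this as $-\tfrac12\big(F_L(0)+E_{F_L}[\Lambda]\big)$ with $F_L(r):=\sum_{p\in L\backslash\{0\}}|p|^{-1}e^{-|p|f(r)}$, noting that $F_L(0)$ depends only on $L$ and $f$. Since $f>0$ (as is needed for $\mathcal{E}_f$ to be defined) and $f'$ is completely monotone, $f$ is a Bernstein function; as $s\mapsto e^{-|p|s}$ is completely monotone, the composition $r\mapsto e^{-|p|f(r)}$ is completely monotone for every $p$, and hence so is $F_L$ after dividing by $|p|$ and summing. Then \cite[Prop 3.1]{BetTheta15} and $d\in\mathfrak{D}$ give that $V_2^{1/d}L_d$ is the unique minimiser of $E_{F_L}$ in $\mathcal{L}_d^\circ(V_2)$, which is the first assertion.

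For the joint statement I would additionally optimise in $L$ with $\Lambda$ fixed, regrouping the same double sum as $-\tfrac12 E_{H_\Lambda}[L]$ with $H_\Lambda(r):=r^{-1/2}G_\Lambda(r^{1/2})$ and $G_\Lambda(s):=\sum_{q\in\Lambda}e^{-sf(|q|^2)}$. Here $r\mapsto r^{-1/2}$ is completely monotone, and each $r\mapsto e^{-f(|q|^2)\sqrt r}$ is completely monotone because $r\mapsto e^{-a\sqrt r}$ is completely monotone for every $a\ge0$; summing over $q$ and multiplying shows $H_\Lambda$ is completely monotone, so \cite[Prop 3.1]{BetTheta15} with $d\in\mathfrak{D}$ makes $V_1^{1/d}L_d$ the unique maximiser of $L\mapsto\mathcal{E}_f[L,\Lambda]$ in $\mathcal{L}_d^\circ(V_1)$. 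Combining the two statements,
\begin{equation*}
\mathcal{E}_f[L,\Lambda]\le\mathcal{E}_f\big[V_1^{1/d}L_d,\Lambda\big]\le\mathcal{E}_f\big[V_1^{1/d}L_d,V_2^{1/d}L_d\big],
\end{equation*}
and equality throughout forces $\Lambda=V_2^{1/d}L_d$ by the uniqueness in the second inequality and then $L=V_1^{1/d}L_d$ by the uniqueness in the first.

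The step I expect to be the main obstacle is the complete-monotonicity bookkeeping: making precise the role of the hypothesis (that $f'$ completely monotone together with positivity makes $f$ a Bernstein function, so that composition with $s\mapsto e^{-|p|s}$ stays completely monotone), recalling that $r\mapsto e^{-a\sqrt r}$ is completely monotone, and justifying that the series defining $F_L$ and $r\mapsto G_\Lambda(r^{1/2})$ may be differentiated term by term so that complete monotonicity survives the limit — along with pinning down the growth condition on $f$ under which all the series above converge absolutely.
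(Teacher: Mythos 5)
Your proposal is correct and follows essentially the same route as the paper: the same rewriting of $\mathcal{E}_f$ as a double exponential sum, the same appeal to \cite[Prop 3.1]{BetTheta15} via the Bernstein/completely-monotone composition argument for the optimisation in $\Lambda$, and, for the optimisation in $L$, a direct check that $r\mapsto e^{-a\sqrt r}/\sqrt r$ is completely monotone, which is precisely the content of Theorem \ref{thm-maxlogL} that the paper simply cites at that point. The convergence and term-by-term differentiation issues you flag are dealt with in the paper only by the remark that the energy is ``clearly well-defined'' (and by closure of the completely monotone class under pointwise limits), so you are not missing anything the paper supplies.
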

\begin{proof}
We have, by absolute convergence,
\begin{equation}\label{eq-energyrewritten}
\mathcal{E}_{f}[L,\Lambda]=-\frac{1}{2}\sum_{q\in \Lambda} \sum_{p\in L\backslash\{0\}} \frac{e^{-|p|f(|q|^2)}}{|p|}=-\frac{1}{2}\sum_{p\in L\backslash \{0\}} \frac{1}{|p|}\left(\sum_{q\in \Lambda} e^{-|p| f(|q|^2)}\right),
\end{equation}
and it is clear that the energy is well-defined. Since $f$ is such that $f'$ is a completely monotone function and $r\mapsto e^{-|p|r}$ is also completely monotone for any $p\in L\backslash \{0\}$, it follows from \cite[Thm 2]{ComplMonotonic} that  $r\mapsto e^{-|p| f(r)}$ is completely monotone. Therefore, by \cite[Prop 3.1]{BetTheta15}, $V_2^{\frac{1}{d}}L_d$ is the unique minimizer of $\Lambda \mapsto \sum_{q\in \Lambda} e^{-|p| f(|q|^2)}$ in $\mathcal{L}_d^\circ(V_2)$ and the first part of the theorem is proved. The second part follows from the first part by using \eqref{eq-energyrewritten} and Theorem \ref{thm-maxlogL}.
\end{proof}
\begin{remark}
As already mentioned in the introduction, it is, as far as we know, the first time that such energy with an interaction potential depending itself on a lattice is studied. It might be interesting to studied a lattice generalisation of the lattice theta function, using the $L$-exponential defined by \eqref{def-expL} and see if one can again derive optimality results of this kind. The fact that the  associated potential $\psi_x$ defined in Remark \ref{rmk-expL} is not convex might be a problem but also an interesting direction to explore.
\end{remark}

  \section{Maximization of $E_{L,\Lambda}^{(m)}(it)$ among simple lattices} \label{sec-MaxE}

It is straightforward to obtain the following formula that connects $E_{L,\Lambda}^{(m)}(it)$ to the lattice-logarithm.
  
  \begin{lemma}[\cite{GannonEta}]
  For any $t,m>0$ and any simples lattices $L,\Lambda\in \mathcal{L}_d$, we have
  \begin{align}\label{eq-formulaenergytotal}
  \log E_{L,\Lambda}^{(m)}(it)=-\frac{t^{\frac{d+1}{2}}}{8 |L|^{1/2}}\int_0^{\infty} s^{\frac{d-1}{2}}e^{-\pi\frac{m^2}{s}}\left( \theta_{L^*}(s)-1 \right)ds+t^{\frac{d-1}{2}}\sum_{p\in L} \log_\Lambda(1-q^{m^2 + |p|^2}).
  \end{align}
  \end{lemma}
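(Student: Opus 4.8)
The plan is to prove the identity \eqref{eq-formulaenergytotal} by taking the logarithm of the defining formula \eqref{Gannoneta} and identifying the two resulting pieces. First I would apply $\log$ to $E_{L,\Lambda}^{(m)}(it)$ as given in \eqref{Gannoneta}, which turns the prefactor $q^{-t^{(d-1)/2}\Delta_m(L)}$ into the linear term $-t^{\frac{d-1}{2}}\Delta_m(L)\log q = \pi t^{\frac{d-1}{2}+1}\Delta_m(L)$ (since $q = e^{-\pi t}$), and turns the double product into the double sum
\begin{equation*}
\sum_{w\in \mathcal{P}_\Lambda/\pm}\sum_{v\in L} \frac{t^{\frac{d-1}{2}}}{2|w|}\log\bigl(1-q^{|w|(m^2+|v|^2)}\bigr).
\end{equation*}
Substituting the definition of $\Delta_m(L)$ shows the first term equals $-\dfrac{t^{\frac{d+1}{2}}}{8|L|^{1/2}}\displaystyle\int_0^\infty s^{\frac{d-1}{2}}e^{-\pi m^2/s}(\theta_{L^*}(s)-1)\,ds$, which is exactly the first term on the right-hand side of \eqref{eq-formulaenergytotal}.

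The remaining task is to recognize the double sum as $t^{\frac{d-1}{2}}\sum_{p\in L}\log_\Lambda(1-q^{m^2+|p|^2})$. By the definition of the lattice-logarithm applied to $\Lambda$, for $x = q^{m^2+|p|^2}\in(0,1)$ we have
\begin{equation*}
\log_\Lambda(1-x) = -\frac{1}{2}\sum_{w\in \Lambda\backslash\{0\}}\frac{x^{|w|}}{|w|}.
\end{equation*}
So I would write $\sum_{p\in L}\log_\Lambda(1-q^{m^2+|p|^2}) = -\frac12\sum_{p\in L}\sum_{w\in\Lambda\backslash\{0\}} \frac{q^{|w|(m^2+|p|^2)}}{|w|}$, and the key point is to re-index the inner sum over $\Lambda\backslash\{0\}$ using the primitive vectors $\mathcal{P}_\Lambda$: every nonzero $w\in\Lambda$ is uniquely $w = n w_0$ with $w_0\in\mathcal{P}_\Lambda$ and $n\in\N$, so the sum over $\Lambda\backslash\{0\}$ splits, up to the $\pm$ symmetry, consistently with the product structure in \eqref{Gannoneta}. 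Expanding $\log(1-y) = -\sum_{n\geq1} y^n/n$ in the double-product sum and matching $y^n$ with the contribution of $w = n w_0$ is the crux; tracking the factor $\frac{1}{2|w|}$ versus $\frac{1}{2|w_0|}\cdot\frac{1}{n}$ and the $\pm$ identification to confirm both sides agree termwise is the main bookkeeping obstacle, but it is purely combinatorial.

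I expect no analytic difficulty: absolute convergence of all the series (for $t,m>0$) is what makes the rearrangements legitimate, and this can be cited exactly as in the proof of Theorem \ref{thm-logpot} where the same $\log_\Lambda$ sum is manipulated. Thus the proof is essentially ``unfold the definitions and reorganize the lattice sum via primitive vectors,'' with the only real content being the correct identification of the primitive-vector indexing with the $q$-product exponents.
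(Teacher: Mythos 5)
Your overall strategy --- take logarithms in \eqref{Gannoneta}, match the prefactor against $\Delta_m(L)$, and re-index the double product via primitive vectors --- is the natural one, and the paper itself offers no proof of this lemma beyond calling it straightforward and citing Gannon, so there is nothing more specific to compare against. Your treatment of the first term is correct: $-t^{\frac{d-1}{2}}\Delta_m(L)\log q=\pi t^{\frac{d+1}{2}}\Delta_m(L)$ reproduces the integral term exactly.

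The problem is that the step you defer as ``purely combinatorial bookkeeping'' is precisely where all the content lies, and when one carries it out with the paper's stated definitions it does not close. Fix $v\in L$ and set $y=q^{m^2+|v|^2}\in(0,1)$. Expanding $\log(1-y^{|w|})=-\sum_{n\geq 1}y^{n|w|}/n$ and using that every $u\in\Lambda\setminus\{0\}$ is uniquely $u=nw$ with $w\in\mathcal{P}_\Lambda$ and $n\in\N$ (so that summing over $w\in\mathcal{P}_\Lambda/\pm$ and $n\geq 1$ picks out exactly one representative of each pair $\pm u$), one finds
\begin{equation*}
\sum_{w\in\mathcal{P}_\Lambda/\pm}\frac{1}{2|w|}\log\bigl(1-y^{|w|}\bigr)=-\frac12\sum_{w\in\mathcal{P}_\Lambda/\pm}\sum_{n\geq 1}\frac{y^{|nw|}}{|nw|}=-\frac14\sum_{u\in\Lambda\setminus\{0\}}\frac{y^{|u|}}{|u|}=\frac12\log_\Lambda(1-y),
\end{equation*}
which is half of what \eqref{eq-formulaenergytotal} asserts for each $v$. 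The simplest sanity check is $d=1$, $L=\Lambda=\Z$: there $\mathcal{P}_\Z/\pm=\{1\}$ and the logarithm of the product is $\frac12\sum_{v\in\Z}\log(1-q^{m^2+v^2})$, while the right-hand side of \eqref{eq-formulaenergytotal} gives $\sum_{v\in\Z}\log_\Z(1-q^{m^2+v^2})=\sum_{v\in\Z}\log(1-q^{m^2+v^2})$. So either the product in \eqref{Gannoneta} should run over all of $\mathcal{P}_\Lambda$ rather than $\mathcal{P}_\Lambda/\pm$, or the exponent should be $t^{\frac{d-1}{2}}/|w|$, or the lemma needs a factor $\frac12$ on its second term; a complete proof has to detect and resolve this normalization mismatch rather than assert that the terms ``agree termwise.'' (Since the discrepancy is a positive constant multiplying one term, it does not affect the maximizers found downstream, but it does make the identity false as literally stated, and it is exactly the bookkeeping you left unchecked.) Your appeal to absolute convergence to justify the rearrangements is fine.
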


Therefore, we can easily derive the maximum of $(L,\Lambda)\mapsto E_{L,\Lambda}^{(m)}(it)$ among simple lattices.

\begin{thm}[Maximizer of $E_{L,\Lambda}^{(m)}(it)$]\label{thm-main}
Let $d\in \mathfrak{D}$, $t,m>0$ and $V_1,V_2>0$. Then $\left(V_1^{\frac{1}{d}}L_d,V_2^{\frac{1}{d}}L_d\right)$ is the unique maximizer of $(L,\Lambda)\mapsto  E_{L,\Lambda}^{(m)}(it)$ in $\mathcal{L}_d^\circ(V_1)\times\mathcal{L}_d^\circ(V_2)$.
\end{thm}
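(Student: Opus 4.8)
The plan is to read off the result directly from the identity \eqref{eq-formulaenergytotal} together with the two variational statements already established in Section \ref{sec-Llog}. Since $E_{L,\Lambda}^{(m)}(it)$ is a positive quantity and $x\mapsto \log x$ is increasing, maximizing $(L,\Lambda)\mapsto E_{L,\Lambda}^{(m)}(it)$ over $\mathcal{L}_d^\circ(V_1)\times\mathcal{L}_d^\circ(V_2)$ is equivalent to maximizing the right-hand side of \eqref{eq-formulaenergytotal}, which splits as a sum of two terms: a ``Casimir'' term $A(L):=-\frac{t^{(d+1)/2}}{8|L|^{1/2}}\int_0^\infty s^{(d-1)/2}e^{-\pi m^2/s}(\theta_{L^*}(s)-1)\,ds$ depending only on $L$, and an ``interaction'' term $B(L,\Lambda):=t^{(d-1)/2}\sum_{p\in L}\log_\Lambda(1-q^{m^2+|p|^2})$.

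\textbf{Step 1: the $\Lambda$-dependence.} For $L$ fixed, only $B(L,\Lambda)$ depends on $\Lambda$, and it has exactly the shape $t^{(d-1)/2}\mathcal{E}_f[\Lambda,L]$ with the roles matching the setting of Theorem \ref{thm-logpot}: take $f(r)=\pi t(m^2+r)$, which is affine in $r$, so $f'\equiv \pi t$ is (trivially) completely monotone, and note $q^{m^2+|p|^2}=e^{-f(|p|^2)}$. Strictly, Theorem \ref{thm-logpot} fixes the ``outer'' lattice in $\log_{(\cdot)}$ and sums over the other; here the $\log$ is taken with respect to $\Lambda$ (the variable) and the sum is over $L$ (fixed), so I invoke the first assertion of Theorem \ref{thm-logpot} with its $L$ replaced by our fixed $L$ and its $\Lambda$ replaced by our variable $\Lambda$ — i.e. $V_2^{1/d}L_d$ is the unique maximizer of $\Lambda\mapsto B(L,\Lambda)$ in $\mathcal{L}_d^\circ(V_2)$, for every fixed $L\in\mathcal{L}_d^\circ(V_1)$. (One must check absolute convergence of the double sum, which is immediate since $q<1$ and $\theta_L$ is finite, as in \eqref{eq-energyrewritten}.)

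\textbf{Step 2: the $L$-dependence.} It remains to maximize $L\mapsto A(L)+B(L,V_2^{1/d}L_d)$ over $\mathcal{L}_d^\circ(V_1)$. For the first term, up to the positive constant the integrand involves $\theta_{L^*}(s)-1$, and the minus sign means $A$ is maximized by minimizing $\int_0^\infty s^{(d-1)/2}e^{-\pi m^2/s}(\theta_{L^*}(s)-1)\,ds$; since $d\in\mathfrak{D}$, $L^*\mapsto\theta_{L^*}(s)$ is minimized simultaneously for all $s>0$ by $W^{1/d}L_d$ where $W$ is the covolume of $L^*$, equivalently $L=V_1^{1/d}L_d$ (using that $L_d$ is isodual up to rotation, or more carefully that $(V_1^{1/d}L_d)^* = V_1^{-1/d}L_d^*$ and $L_d^*$ is a rotation of $L_d$), and the prefactor $|L|^{-1/2}=V_1^{-1/2}$ is constant on $\mathcal{L}_d^\circ(V_1)$. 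For the second term, $B(L,V_2^{1/d}L_d)=-\tfrac12 t^{(d-1)/2}\sum_{w\in (V_2^{1/d}L_d)\setminus\{0\}}\frac{1}{|w|}\sum_{p\in L}e^{-\pi t|w|(m^2+|p|^2)}$, so for each fixed $w$ the inner sum over $p\in L$ is a constant times $\theta_L(t|w|)$, and again since $d\in\mathfrak{D}$ this is minimized over $\mathcal{L}_d^\circ(V_1)$ by $V_1^{1/d}L_d$ for every $w$; the overall minus sign then makes $V_1^{1/d}L_d$ the unique maximizer of this term. Both terms are thus maximized by the \emph{same} lattice $V_1^{1/d}L_d$, so the sum is too, with uniqueness inherited from the uniqueness of the theta-minimizer in each term.

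\textbf{Main obstacle.} The only genuine point requiring care is the duality bookkeeping in Step 2: the first term is governed by $\theta_{L^*}$ rather than $\theta_L$, so one must confirm that minimizing $L^*\mapsto\theta_{L^*}(s)$ in the covolume-$1/V_1$ class is equivalent to $L=V_1^{1/d}L_d$, which uses that $L_d$ (for $d\in\{2,8,24\}$, and by definition of $\mathfrak{D}$ in general) is self-dual up to rotation and scaling; and one must make sure the minimizing lattice from the $A$-term and from the $B$-term coincide rather than merely both being ``optimal'' in some weaker sense. Once this is pinned down, the theorem follows by combining Steps 1 and 2, with the uniqueness statement coming directly from the uniqueness in Montgomery's / the Cohn--Kumar--Miller--Radchenko--Viazovska theorems packaged in the definition of $\mathfrak{D}$.
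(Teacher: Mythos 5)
Your proposal follows essentially the same route as the paper: split $\log E_{L,\Lambda}^{(m)}(it)$ via \eqref{eq-formulaenergytotal} into the Casimir term and the interaction term, handle the first by theta-function minimization of the dual (using self-duality of $L_d$ up to rotation), and handle the second by the results of Section \ref{sec-Llog}; your duality bookkeeping for the $A$-term is in fact spelled out more carefully than in the paper. One citation in Step 1 is off, though: the first assertion of Theorem \ref{thm-logpot} fixes the log-lattice and optimizes the \emph{summation} lattice, so it does not apply to $\Lambda\mapsto\sum_{p\in L}\log_\Lambda(1-q^{m^2+|p|^2})$, where the variable is the log-lattice and the summation lattice is fixed — no relabelling of the arguments fixes this role reversal. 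The statement you need there is Theorem \ref{thm-maxlogL} applied termwise to each $\log_\Lambda(1-q^{m^2+|p|^2})$ with $1-q^{m^2+|p|^2}\in(0,1)$ (equivalently, the observation that $\Lambda\mapsto\sum_{w\in\Lambda\setminus\{0\}}h(|w|^2)$ with $h(r)=r^{-1/2}\sum_{p\in L}e^{-\sqrt{r}\,f(|p|^2)}$ is a completely monotone energy). Since your Step 2 already redoes the summation-lattice optimization by hand — which is exactly the content of the first assertion of Theorem \ref{thm-logpot} — the argument closes once that one reference is corrected; the paper itself shortcuts both steps by citing the second ("pair of maximizers") part of Theorem \ref{thm-logpot} directly.
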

\begin{proof}
We use the previous formula \eqref{eq-formulaenergytotal}. Since $d\in \mathfrak{D}$, $s^{\frac{d-1}{2}}e^{-\pi\frac{m^2}{s}}>0$ for any $s>0$ and $L_d=L_d^*$ as a simple consequence of the Poisson Summation Formula, we know that 
$$
L\mapsto \int_0^{\infty} s^{\frac{d-1}{2}}e^{-\pi\frac{m^2}{s}}\left( \theta_{L^*}(s)-1 \right)ds
$$
is minimized by $V_1^{\frac{1}{d}}L_d$ in $\mathcal{L}_d^\circ(V_1)$. Therefore the first term is maximized by $V_1^{\frac{1}{d}}L_d$. For the second term, we use Theorem \ref{thm-logpot} with $f(r)=\pi t(m^2+r)$ and we remark that $f'(r)=\pi t$ is a completely monotone function, which implies that the second term is maximized by $\left(V_1^{\frac{1}{d}}L_d,V_2^{\frac{1}{d}}L_d\right)$, which concludes the proof.
\end{proof}

\medskip

\noindent \textbf{Acknowledgement:} I acknowledge support
from VILLUM FONDEN via the QMATH Centre of Excellence (grant No. 10059).

{\small
\bibliographystyle{plain} 
\bibliography{Bibliogeneral}}
\end{document}